\documentclass{amsart}
\usepackage[latin1]{inputenc}
\usepackage[english]{babel}
\usepackage{amstext}
\usepackage{latexsym}
\usepackage{amsmath}
\usepackage{amssymb}
\usepackage{amsfonts}
\usepackage{graphicx}
\usepackage{amssymb}
\usepackage{amsthm}
\usepackage{epsfig}
\usepackage{tikz}
\usetikzlibrary{matrix,arrows,decorations.pathmorphing}
\usepackage{graphicx}
\usepackage{enumitem}
\usepackage{xcolor}
\allowdisplaybreaks

\input xy
\xyoption{all}
\vfuzz2pt 
\hfuzz2pt 

\newtheorem{defn}{Definition}[section]

\newtheorem{lemma}[defn]{Lemma}

\newtheorem{prop}[defn]{Proposition}

\newtheorem*{theoA}{Theorem A}

\newcommand{\Z}{\mathbb Z}

\begin{document}

\title[Density of non-zero exponent of contraction]{Density of non-zero exponent of contraction for pinching cocycles in $\text{Hom}(S^1)$}

\author{Catalina Freijo}
\address{Faculdade de Ciencias, Universidade de Lisboa, Campo Grande 016, 1749-016 Lisboa, Portugal.}
\email{cfreijo@fc.ul.pt}

\author{Karina Marin}
\address{Departamento de Matem\'atica, Universidade Federal de Minas Gerais, Av. Ant\^onio Carlos 6627, 31270-901 Belo Horizonte
Minas Gerais, Brazil}
\email{kmarin@mat.ufmg.br}

\begin{abstract}
We consider pinching cocycles taking values in the space of homeomorphisms of the circle over an hyperbolic base. Using the Invariance Principle of Malicet, we prove that the cocycles having non-zero exponents of contraction are dense. In this article we generalize some common notions an results known of linear cocycles and cocycles of diffeomorphisms, to the non-linear non-differentiable case.  
\end{abstract}

\subjclass[2010]{Primary: 37H15  ; Secondary: 37D30, 37D25 } 

\maketitle

\section{Introduction}

A continuous cocycle over a transformation $f\colon X\to X$ is a map $F\colon\mathcal{E} \to\mathcal{E}$, where $\mathcal{E}$ is a fiber bundle which fibers $N$ are topological spaces, such that the following diagram 
$$\begin{matrix}
&\mathcal{E}& \xrightarrow{F} &\mathcal{E} \\
&\downarrow & &\downarrow\\
&X & \xrightarrow{f} &X
\end{matrix}$$
commutes and the action on the fibers $F_x\colon \mathcal{E}_x\to\mathcal{E}_{f(x)}$ is an homeomorphism. In this case its orbit takes the form
$$F^n_x=F_{f^{n-1}(x)}\circ\ldots \circ F_{x}.$$

In this paper, we consider the particular case when the fiber $N=S^1$ and the maps $F_x$ are bi-H\"older homeomorphisms. In this context we prove, that under certain conditions, the hyperbolicity of the base map $f$ induces contraction properties in the fibers. This generalizes known results of linear cocycles and cocycles of diffeomorphisms.  

When studying linear cocycles, we consider the trivial bundle $X\times\mathbb{R}^d$ and a cocycle defined by $F_x$ being linear maps acting on $\mathbb{R}^d$. In this case, the Lyapunov exponents, $$\lim_{n\to +\infty} \frac{1}{n}\log \|F^n_xv\|, \ v\in \mathbb{R}^d,$$ provides information about the asymptotic behavior of the dynamics on the fibers. 

Furstenberg proved in \cite{F} that for random product of matrices the case of non-zero Lyapunov exponents is open and dense. This result was extended to linear cocycles over uniform hyperbolic maps by \cite{BGV} and when the base is non-uniformly hyperbolic by \cite{V}. 


For cocycles of diffeomorphisms, that is, when the fiber $N$ is a manifold and the maps $F_x$ are diffeomorphisms, we still can obtain information of the dynamic through the Lyapunov exponents, $$\lim_{n\to +\infty} \frac{1}{n}\log \|D_vF_x^n\xi\|,\; \xi\in T_v\mathcal{E}_x.$$ In this context, characterization of the set of cocycles with non-zero Lyapunov exponents has been studied in \cite{AV}.

The principal tool that is used for understanding the structure of cocycles with zero Lyapunov exponents is the Invariance Principle. The Invariance Principle was originally proved by Furstenberg \cite {F} and Ledrappier \cite{L} in the linear case and adapted to the non-linear differentiable context by  Avila and Viana in \cite{AV}. This result states that if the Lyapunov exponents vanish, then the fibers carries some structure that remains invariant by a family of homeomorphisms acting between the fibers. The center question is whether by perturbation of the cocycle this structure can be broken.

Malicet introduced in \cite{M} a version of the Invariance Principle for continuous cocycles of circle homeomorphisms. This result uses the notion of exponent of contraction, $$\limsup_{  {q}\to   {p}}\limsup_{n\to +\infty}\frac{\log(d(  {f}^n_{  {x}}({  {p}}),  {f}^n_{  {x}}({  {q}})))}{n}, \; p\in S^1,$$ which generalizes the concept of Lyapunov exponent and provides information about the dynamic on the fibers. 

The Invariance Principle of Malicet was used in \cite{M} to conclude several results about random walks of homemorphisms of the circle and has been applied by other authors to different settings, see for example \cite{CS} and \cite{DGR}.

In the present work, we use the Invariance Principle of Malicet to study cocyles whose action on the fibers are circle homeomorphisms and extend the known results of non-zero Lyapunov exponents to the non-linear non-differentiable case using the notion of exponent of contraction.


\section{Preliminaries and statements}

Let ${\Omega}\subset \{1,...,k\}^{\mathbb{Z}}$ be a sub-shift of finite type and ${\sigma}\colon  {\Omega}\to  {\Omega}$ denote the left-shift map defined by $  {\sigma}(  {x}_n)_{n\in\mathbb{Z}}=(  {x}_{n+1})_{n\in\Z}$. 

For every $\rho>1$, we can define a distance in $  {\Omega}$ by $d_{\rho}(  {x},  {y})=\rho^{-N_{  {x},  {y}}}$, where $N_{  {x},  {y}}=\max\{N\geq 0; x_n=y_n \text{ for every } |n|<N\}$. Since the topologies given by the different constants $\rho$ are equivalent, from now on we consider $\rho$ fixed and denote this distance as $d_{\Omega}$. 

Let $P^s\colon \Omega\to \Omega^{+}$ be the projection onto the positive coordinates and $P^u\colon \Omega\to \Omega^{-}$ the projection onto the negative coordinates. 

For every $i\in \{1,...,k\}$, denote $[0;i]=\{x\in \Omega: x_0=i\}$ and $\psi_i$ the homeomorphism $$\psi_i\colon P^u([0;i])\times P^s([0;i])\to [0;i].$$

\begin{defn} Given a ${\sigma}$-invariant measure $\mu$, define $\mu^s=P^s_*\mu$ and $\mu^s=P^s_*\mu$. 

The measure $\mu$ is said to have \emph{local product structure} if there exists a continuous function $\rho\colon \Omega\to (0,\infty)$ such that for every $i\in \{1,..., k\}$ and every measurable set $E\subset [0;i]$ we have $$\mu(E)=\int (\chi_E\circ \psi_i)\,\rho\, d\mu^u\times d\mu^s.$$ 
\end{defn}

It has been shown in \cite{H} and \cite{Ll}, that in the setting of this paper, equilibrium states of H\"older potentials have local product structure. 

Let $\text{Hom}(S^1)$ be the set of homeomorphisms of the circle and $\mathcal{H}_{\beta}(S^1)\subset \text{Hom}(S^1)$ be the set of $\beta$-H\"older maps which inverse is also $\beta$-H\"older. 

We consider $\mathcal{H}_{\beta}(S^1)$ with the metric $$d_{\max}(  {f}_1,  {f}_2)=\max\{d_{\beta}(  {f}_1,  {f}_2), d_{\beta}(  {f}_1^{-1},  {f}_2^{-1})\},$$ where $d_{\beta}$ is the usual H\"older distance. This means, $$d_{\beta}(  {f}_1,  {f}_2)=\sup_{  {p}\in S^1} d(  {f}_1(  {p}),  {f}_2(  {p}))+ |H_{\beta}(  {f}_1)-H_{\beta}(  {f}_2)|,$$ where ${H}_{\beta}(  {f})$ denote the H\"older constant of $f$ and $d$ is the standard distance on $S^1$.

Let $\mathcal{H}_{\alpha}(  {\Omega},\mathcal{H}_{\beta}(S^1))$ be the set of $\alpha-$H\"older maps defined from $(  {\Omega},d_{\Omega})$ to \break $(\mathcal{H}_{\beta}(S^1),d_{\max})$ endowed with the usual H\"older distance $d_{\alpha}$. 

The cocycle induced by $  {f}\in\mathcal{H}_{\alpha}(  {\Omega},\mathcal{H}_{\beta}(S^1))$ is the skew-product $  {F}_{  {f}}\colon   {\Omega}\times S^1\to  {\Omega}\times S^1$ defined by $$  {F}_{  {f}}(  {x},  {p})=(  {\sigma}(  {x}),   {f}_{  {x}}(  {p})).$$ For the rest of the work we use the notation $  {F}$ when there is not needed to specify the map $  {f}$.

Using the notation 
$$  {f}^n_{  {x}}=  {f}_{  {\sigma}^{n-1}(  {x})}\circ\ldots\circ  {f}_{  {x}}, \ \ \forall x\in\Omega,$$
the iterates of $  {F}$ are given by $  {F}^n(  {x},  {p})=(  {\sigma}^n(  {x}),  {f}^n_{  {x}}(  {p}))$.

In this context we introduce the notion of exponent of contraction due to Malicet \cite{M}. This quantity measures the contracting exponential rate of the action on the fibers. 

\begin{defn}
The exponent of contraction of $  {F}$ at the point $(  {x},  {p})$ is the non positive quantity 
$$\lambda_{con}(  {F},  {x},  {p})=\limsup_{  {q}\to   {p}}\limsup_{n\to +\infty}\frac{\log(d(  {f}^n_{  {x}}({  {p}}),  {f}^n_{  {x}}({  {q}})))}{n}.$$
If $  {m}$ is a $  {F}$- invariant probability measure, the exponent of contraction of $  {m}$ is defined as 
$$\lambda_{con}(  {F},  {m})=\int_{  {\Omega}\times S^1}\lambda_{con}(  {F},  {x},  {p})d  {m}(  {x},  {p}).$$
\end{defn}
Note that $\lambda_{con}$ is $  {F}$-invariant, then $\lambda_{con}$ is constant $  {m}$-almost everywhere if $  {m}$ is ergodic.

\begin{theoA} Let $f\in\mathcal{H}_{\alpha}(  {\Omega},\mathcal{H}_{\beta}(S^1))$ and ${\mu}$ be a $  {\sigma}$-invariant measure satisfying the following assumptions: 
\begin{itemize}
\item [\emph{(a)}] $\mu$ is ergodic, fully supported and has local product structure.
\item [\emph{(b)}] $  {F}_f$ is a \emph{pinching cocycle}, that is there exists a periodic point of $  {\sigma}$, $  {x}_0$,  such that $  {f}^{\text{per}(  {x}_0)}_{  {x}_0}$ has two fixed points: one attractor and one repelling. 
\item[\emph{(c)}]$  {F}_f$ is \emph{su-dominated}, this means that there exists a constant $c<1$ such that
$$\label{dom}\max\{H_{\alpha}(  {f}_*)\rho^{-\alpha\beta}: \,   {f}_*\text{ is a fiber map of }  {F}_f \text{ or }  {{F}_f}^{-1}\}\leq c.$$
\end{itemize}
Then for every open neighborhood $\mathcal{U}$ of $  {f}$ in $\mathcal{H}_{\alpha}(  {\Omega},\mathcal{H}_{\beta}(S^1))$, there exists an open set $\mathcal{V} \subset \mathcal{U}$ such that for any $  {g}\in  \mathcal{V}$ and any $  {F}_{  {g}}$-invariant measure $  {m}$ projecting to $  {\mu}$, we have
$$\lambda_{con}(  {F}_{  {g}},  {m})<0 \text{ or } \lambda_{con}( {{F}_  {g}}^{-1},  {m})<0.$$
\end{theoA}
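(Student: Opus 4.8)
The plan is to follow the Furstenberg–Avila–Viana strategy, adapted to the circle-homeomorphism setting via Malicet's Invariance Principle. The dichotomy we want to establish says that, after an arbitrarily small perturbation, \emph{every} invariant measure $m$ over $\mu$ has a non-zero contraction exponent (for $F_g$ or for $F_g^{-1}$). So I would argue by contradiction: suppose that in a neighborhood $\mathcal{V}$ there is a sequence $g_j \to g_\infty$ (with $g_\infty$ obtained from a small perturbation of $f$ still satisfying (a)–(c), which is an open condition by the su-domination hypothesis (c) and the robustness of the pinching periodic orbit in (b)) together with $F_{g_j}$-invariant measures $m_j$ projecting to $\mu$ such that both $\lambda_{con}(F_{g_j}, m_j) = 0$ and $\lambda_{con}(F_{g_j}^{-1}, m_j) = 0$. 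Passing to a weak-$*$ limit $m_\infty$, and using upper semicontinuity of the contraction exponent in the cocycle and the measure (this semicontinuity is the non-linear analogue of the classical fact for Lyapunov exponents; one uses that $\lambda_{con}$ is a $\limsup$ of an average of logs of distances between iterates, which is upper semicontinuous under the $d_\alpha$-topology on cocycles and weak-$*$ convergence of measures), we get an $F_{g_\infty}$-invariant measure $m_\infty$ over $\mu$ with both contraction exponents zero.

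Now I apply Malicet's Invariance Principle to $F_{g_\infty}$: since the contraction exponents vanish, $m_\infty$ disintegrates along the fibers into conditional probabilities $\{m_{\infty,x}\}$ that are \emph{invariant} under the holonomies of the cocycle — precisely, the stable and unstable holonomy homeomorphisms $H^{s}_{x,y}$ and $H^{u}_{x,y}$ are well defined (su-domination (c) guarantees the contraction rates needed for the holonomy limits to converge, exactly as in the linear case where $H_\alpha \rho^{-\alpha\beta} < 1$ forces exponential convergence of the holonomy cocycle), and $(H^s_{x,y})_* m_{\infty,x} = m_{\infty,y}$ for $y \in W^s_{loc}(x)$, similarly for $u$. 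Combined with the local product structure of $\mu$ from (a), this promotes the disintegration to a genuinely continuous family $x \mapsto m_{\infty,x}$ that is bi-invariant under $s$- and $u$-holonomies; this is where hypothesis (a) is essential — it lets us glue the $s$- and $u$-invariances into a globally coherent continuous section, following the Avila–Viana continuity argument.

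The contradiction is then extracted at the pinching periodic point $x_0$ from (b). Iterating $f^{\mathrm{per}(x_0)}_{x_0}$ forward pushes $m_{\infty,x_0}$ toward the Dirac mass at the attracting fixed point $p^+$, while iterating backward pushes it toward the Dirac mass at the repelling fixed point $p^-$ (equivalently, the attracting point of the inverse). By holonomy-invariance and continuity of $x \mapsto m_{\infty,x}$, together with the fact that the $s/u$-holonomies intertwine the conditional measures along the homoclinic relations of $x_0$ (which are dense because $\mu$ is fully supported and $\sigma$ is topologically mixing on a sub-shift of finite type — here I may need a mixing/transitivity reduction, or work with the relevant transitive component), one forces $m_{\infty,x_0}$ to be simultaneously supported on (or equal to) a single atom \emph{and} to be holonomy-invariant in a way that is incompatible with the North–South dynamics of $f^{\mathrm{per}(x_0)}_{x_0}$ unless the conditional measures are trivial — but a continuous family of Dirac masses invariant under all holonomies would itself be a continuous invariant section, which one then perturbs away. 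More precisely, I would show that such an invariant continuous family gives an $F_{g_\infty}$-invariant continuous circle subbundle (a finite set of graphs), and a small perturbation supported near $x_0$ breaks it — contradicting that every $g_j$ close to $g_\infty$ had such a measure.

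The main obstacle I expect is the passage from "vanishing contraction exponent" to "holonomy-invariant continuous disintegration" with enough regularity to run the pinching argument: in the non-differentiable setting one does not have derivative cocycles, so the holonomies must be built purely from the contraction estimates provided by (c), and proving their Hölder continuity in the base point $x$ (needed to upgrade measurable invariance to continuous invariance via the local product structure) is the delicate technical heart — this is exactly the "generalization of notions and results from linear cocycles to the non-linear non-differentiable case" advertised in the abstract, and I would expect the bulk of the paper's preliminary sections to be devoted to setting up these holonomies and their regularity before the theorem's proof can be assembled as above.
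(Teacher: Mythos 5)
Your high-level strategy --- route zero contraction exponents through Malicet's Invariance Principle to get holonomy-invariant disintegrations, upgrade these to a continuous $su$-invariant family via the local product structure, and then break the rigidity at the pinching periodic orbit using homoclinic intersections --- is essentially the one the paper follows. But the assembly has two genuine problems.

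First, the claimed upper semicontinuity of $\lambda_{con}$ in $(g,m)$ is a gap, and it is precisely what the paper is structured to avoid. The contraction exponent is a double $\limsup$, not an infimum of subadditive averages, so the usual mechanism for upper semicontinuity of Lyapunov exponents does not apply; moreover $\lambda_{con}$ is a nonpositive, ``bottom''-type quantity, whose linear analogue is \emph{lower}, not upper, semicontinuous. Your reduction to $g_\infty$ therefore does not go through as written. The paper sidesteps this by reversing the order: it first proves (Proposition 3.2) that $\lambda_{con}(F_g,m)=\lambda_{con}(F_g^{-1},m)=0$ forces $m$ to be an $su$-state, and then uses that $su$-states are closed under weak-$*$ limits in both the cocycle and the measure (Corollary 5.3 of Avila--Viana) to get openness. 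You should apply the semicontinuity to the $su$-state property, where it is actually available, rather than to the exponent.

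Second, the contradiction at the end is circular as you have set it up. You suppose $g_j\to g_\infty$ with bad measures, deduce that $g_\infty$ carries an $su$-state, and then propose to ``perturb near $x_0$ to break it.'' But a perturbation of $g_\infty$ produces a \emph{new} cocycle without $su$-states; it does not contradict the existence of an $su$-state for $g_\infty$, nor does it contradict the assumed behaviour of the $g_j$. The paper's logic is the other way around: first perturb $f$ once and for all to a $g$ (Proposition 3.1) for which no $su$-state exists --- concretely, compose $f$ with a small rotation on a bump function supported in a neighbourhood of a homoclinic point $z$ of $x_0$, chosen so that the holonomy loop $\eta_{2,g}^{-1}\circ\eta_{1,g}$ no longer fixes the two-point set $\{a,r\}$ --- and \emph{then} use weak-$*$ compactness plus closedness of $su$-states to get an open neighbourhood $\mathcal{V}$ of $g$ with no $su$-states, on which Proposition 3.2 yields a negative exponent for one of $F_h$, $F_h^{-1}$. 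Also note that the conditional measure $m_{x_0}$ need only be supported on $\{a,r\}$, not be a single atom, and the obstruction one breaks is the invariance of that two-point support under the homoclinic holonomy loop, not a ``finite set of graphs'' per se.

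Finally, be aware that the passage from the Invariance Principle (which gives invariance of the disintegration of a pushed-forward measure over the one-sided shift) back to $s$-invariance of the disintegration of $m$ itself requires the intermediate conjugation to a cocycle constant along local stable sets and a martingale convergence argument (Equations (3.2)--(3.4) in the paper); this is where the holonomies built from hypothesis (c) are actually used, and it is not automatic from stating Malicet's theorem.
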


\section{Proof of Theorem A} 

For simplifying the notation we develop the proof assuming $\alpha=\beta=1$, that is, we suppose the maps are Lipschitz continuous. However, the proof still works without this consideration. 

From now on we consider $f\in \mathcal{H}_{1}(    {\Omega},\text{Lip}(S^1))$ and ${\mu}$ satisfying the hypotheses of Theorem A.

We say the cocycle $  {F}_f$ has stable holonomies if there exists a collection of $\gamma$-H\"older ho\-meo\-mor\-phisms $h^{s}_{  {x},  {y}}\colon {S}^1\to {S}^1 $, with uniform H\"older constant, defined for every $  {y}\in W^s_{loc}(  {x})$ satisfying 
\vspace{0.2cm}

\begin{enumerate}[label=\emph{(\alph*)}]
		\item $h^{s}_{    {y},    {z}}\circ h^{s}_{    {x},    {y}}= h^{s}_{    {x},    {z}}$ and $h^{s}_{    {x},    {x}}=Id$;
		\vspace{0.2cm}
		
		\item $h^{s}_{    {\sigma}(    {x}),    {\sigma}(    {y})}=    {f}_{    {y}}\circ h^{s}_{    {x},    {y}}\circ     {f}_{    {x}}^{-1}$; 
		\vspace{0.2cm}
		
		\item $(    {x},    {y})\mapsto h^{s}_{    {x},    {y}}$ is continuous for every $    {x}\in     {\Omega}$ and $    {y}\in W^s_{loc}(    {x})$.
	\end{enumerate}
	\vspace{0.2cm}
	
	A \emph{unstable holonomy} for $    {F}_f$ is defined analogously for points in the same local unstable set. 
	
The domination condition in item (c) allows us to use the classical graph transform methods for obtaining stable laminations for the cocycle $    {F}_f$. This technique was developed in \cite{HPS}. Observe that $su$-domination is an open property, then there exists $\mathcal{W}$ an open neighborhood of ${f}$ in $\mathcal{H}_{1}(    {\Omega},\text{Lip}(S^1))$ such that $F_g$ is $su$-dominated for every $g\in \mathcal{W}$. In Proposition 5.1 and 5.2 of Avila and Viana \cite{AV}, the authors proved that in the present context the holonomies exist and vary continuously with the cocycle. This means that the map $f\mapsto h_{x,y}^{s,f}$ is continuous in $\mathcal{W}$ for every $x,y$ in the same stable set.

Let $g\in \mathcal{W}$. For every $F_g$-invariant probability measure $m$, denote by $m_x$ the Rokhlin disintegration into conditional probabilities associated to the partition $\{\{x\}\times S^1\}_{x\in\Omega}$. That is, $\{    {m}_{    {x}}\}_{     {x}\in     {\Omega}}$ is a measurable family of probability measures such that $    {m}_{    {x}}(\{    {x}\}\times S^1)=1$ for $    {\mu}$-almost every $    {x}\in    {\Omega}$ and $$    {m}(E)=\int     {m}_{    {x}}(E\cap (\{    {x}\}\times S^1))d    {\mu}$$ for every measurable set $E\subset     {\Omega}\times S^1$. See \cite{R}.

We say that a $F_g$-invariant probability measure $    {m}$ projecting to $\mu$ admits an s-invariant disintegration if there exists a $    {\mu}$-full measure set $E$ satisfying 
$$    {m}_{    {y}}=\left(h^{s,g}_{    {x},    {y}}\right)_*    {m}_{    {x}},$$
for $    {x},    {y}\in E$ and $    {y}\in W^s_{loc}(    {x})$. The measure $m$ is called an s-state if it admits an s-invariant disintegration. The definitions of u-invariant and u-state are analogous.

We say that $m$ is an su-state if it admits a disintegration which is  both s and u-invariant. 

Next we state two proposition that will allow us to prove Theorem A.

\begin{prop}\label{perturb} There exists $g$ arbitrarily close to $f$ such that $F_g$ does not admit $su$-states. 
\end{prop}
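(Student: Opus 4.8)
The goal is to perturb $f$ inside $\mathcal{W}$ so that the resulting cocycle admits no $su$-state. The key structural fact I would exploit is the pinching hypothesis (b): there is a periodic point $x_0$ for $\sigma$ whose return map $f^{\mathrm{per}(x_0)}_{x_0}$ has a hyperbolic attractor $p_+$ and a hyperbolic repeller $p_-$. The plan is to show that an $su$-state, via its invariance under the stable and unstable holonomies together with the dynamics on the periodic orbit, is forced into a rigid ``position'' that a small perturbation can destroy.

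First I would study what an $su$-state looks like along the orbit of $x_0$. Since $m$ is $F_g$-invariant and projects to $\mu$, the conditional $m_{x_0}$ is invariant under $f^{\mathrm{per}(x_0)}_{x_0}$; because that map has a North--South dynamics (attractor $p_+$, repeller $p_-$), any invariant probability on the fiber over $x_0$ must be a convex combination of the Dirac masses $\delta_{p_+}$ and $\delta_{p_-}$. Next, using the homoclinic structure: because $\mu$ is fully supported and has local product structure, the point $x_0$ has homoclinic points, i.e. points $z$ with $z\in W^s_{loc}$ of a point in $O(x_0)$ and $z\in W^u_{loc}$ of another point in $O(x_0)$. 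Composing stable and unstable holonomies along such a homoclinic loop produces a circle homeomorphism $\varphi$ (the ``holonomy loop map'') which, by $s$- and $u$-invariance of the disintegration, must preserve $m_{x_0}$, hence must preserve $\{p_+,p_-\}$ (or fix each of $p_+,p_-$, after passing to a power). This is the rigidity I want: the atoms of $m_{x_0}$ are pinned to a finite set determined by the holonomy loop together with the attractor/repeller.

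The perturbation step is then the heart of the argument. I would fix a cocycle element $f_{y}$ for some $y$ off the orbit of $x_0$ (so that changing it does not touch the periodic data $p_\pm$ and does not move the local stable/unstable holonomies too much — here I invoke the continuous dependence $f\mapsto h^{s,f}_{x,y}$ from Avila--Viana's Propositions 5.1--5.2) and compose it with a small rotation or a small diffeomorphism supported near the relevant atoms. The effect is to move the holonomy loop map $\varphi$ so that the (perturbed) set $\{p_+,p_-\}$ is no longer preserved: concretely, one arranges $\varphi_g(p_+)\notin\{p_+,p_-\}$ for the perturbed cocycle $g$ while keeping $p_+,p_-$ as the attractor/repeller of the periodic return map. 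For such $g$ no invariant disintegration can be simultaneously $s$- and $u$-invariant, because over $x_0$ it would have to be supported on $\{p_+,p_-\}$ yet be preserved by $\varphi_g$, a contradiction. Since the perturbation can be taken arbitrarily small in $d_\alpha$, this gives $g$ arbitrarily close to $f$ with $F_g$ admitting no $su$-state.

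The main obstacle I expect is the bookkeeping around the holonomy loop: one must make sure the homoclinic point exists (this uses full support and local product structure of $\mu$, plus the sub-shift of finite type structure so that the relevant transitions are admissible), that the composed holonomy $\varphi$ is genuinely defined on all of $S^1$ and depends continuously on the cocycle, and that the perturbation — which we are only allowed to make in the $\mathcal{H}_\alpha(\Omega,\mathcal{H}_\beta(S^1))$ topology, i.e. changing finitely many fiber maps $f_y$ by a small $d_{\max}$-amount — actually perturbs $\varphi$ in the desired direction without disturbing the hyperbolic periodic data. Making the latter ``genericity of the loop map'' precise, and checking it survives in an open set, is where the real work lies; the North--South/atoms argument and the invariance reductions are comparatively soft.
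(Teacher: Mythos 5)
Your plan follows essentially the same route as the paper's proof: pinching forces the conditional measure $m_{x_0}$ to be carried by the two fixed points $\{a,r\}$ of the return map; a homoclinic point $z$ of $x_0$ produces, via stable and unstable holonomies together with finitely many fiber maps, a loop map on the fiber over $x_0$ that any $su$-state must leave invariant, hence must preserve $\{a,r\}$; and a small rotation perturbation breaks that rigidity. This is exactly the paper's argument (there the loop map is $\eta_{2,g}^{-1}\circ\eta_{1,g}$ with $\eta_{1,g}=(g^{k_1}_{z})^{-1}\circ h^{s,g}_{x_0,\sigma^{k_1}(z)}$ and $\eta_{2,g}=g^{k_2}_{\sigma^{-k_2}(z)}\circ h^{u,g}_{x_0,\sigma^{-k_2}(z)}$).

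The one place where your sketch has a real gap is the perturbation itself, which you have correctly flagged as ``where the real work lies'' but described in a way that would not, as stated, work. You propose perturbing the fiber map at ``some $y$ off the orbit of $x_0$'' and then invoking continuous dependence of holonomies to argue the loop map moves. Two problems: (i) if $y$ is also off the orbit of $z$, the loop map does not move at all, since the holonomies $h^{s}_{x_0,\sigma^{k_1}(z)}$, $h^{u}_{x_0,\sigma^{-k_2}(z)}$ and the finite compositions $g^{k_1}_z$, $g^{k_2}_{\sigma^{-k_2}(z)}$ are built from fiber maps along orbits that never see $y$; the perturbation must be placed on the homoclinic orbit; (ii) if you only invoke continuous dependence, the change in the holonomies is of the same order $\delta$ as the rotation you insert, so you cannot conclude the net displacement is nonzero without a quantitative comparison. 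The paper avoids both problems by taking a Lipschitz bump function $\phi$ supported in a neighborhood $U$ of $z$ with $x_0\notin U$ and $\sigma^n(z)\notin U$ for all $n\neq0$, and setting $g_x = R_{\phi(x)\delta}\circ f_x$. With this choice the periodic return map at $x_0$ and every holonomy entering $\eta_1,\eta_2$ are \emph{literally unchanged}; the only effect is a rotation factor in one of the $\eta_i$, and $\delta$ is then chosen so that the rotated pair of atoms misses $\{\eta_{2,f}(a),\eta_{2,f}(r)\}$. Also, one cannot ``change finitely many fiber maps'' on a Cantor base and stay H\"older; the bump function supported on an open set is essential. These are the concrete pieces your plan would need to be a proof.
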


\begin{prop}\label{state} Let $m$ be a $F_g$-invariant probability measure projecting to $\mu$. If $\lambda_{con}(    {F}_g,    {m})=0$ and $\lambda_{con}(    {F}_g^{-1},    {m})=0$, then $m$ is an $su$-state.
\end{prop}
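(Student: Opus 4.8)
The plan is to show that vanishing of both contraction exponents forces the disintegration $\{m_x\}$ to be invariant under stable holonomies (an $s$-state) and, symmetrically, under unstable holonomies (a $u$-state), whence $m$ is an $su$-state. By the symmetry between $F_g$ and $F_g^{-1}$ (which interchanges the roles of stable and unstable sets and of $\lambda_{con}(F_g,\cdot)$ and $\lambda_{con}(F_g^{-1},\cdot)$), it suffices to prove that $\lambda_{con}(F_g,m)=0$ implies $m$ is a $u$-state — wait, more precisely one should be careful about which exponent controls which holonomy. The contraction exponent of $F_g$ measures contraction under forward iteration $f^n_x$; since forward iteration contracts along stable sets, controlling $\lambda_{con}(F_g,m)$ is what lets one compare $m_x$ and $m_y$ for $y\in W^s_{loc}(x)$ via the stable holonomy. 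So the symmetric statement is: $\lambda_{con}(F_g,m)=0 \Rightarrow m$ is an $s$-state, and $\lambda_{con}(F_g^{-1},m)=0 \Rightarrow m$ is a $u$-state. I will prove the first; the second is then obtained by applying the first to $F_g^{-1}$.

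To prove $\lambda_{con}(F_g,m)=0 \Rightarrow m$ is an $s$-state, I would adapt the Furstenberg–Ledrappier/Avila–Viana argument to this non-differentiable setting. First, reduce to a statement on the one-sided shift: because $\lambda_{con}(F_g,m)=0$ and the exponent is $F_g$-invariant and $m$-integrable, an ergodic decomposition lets us assume $m$ ergodic with $\lambda_{con}(F_g,x,p)=0$ for $m$-a.e.\ $(x,p)$. Next, fix a generic point $x$ and $y\in W^s_{loc}(x)$, and use the cocycle relation (b) for stable holonomies, $h^s_{\sigma^n(x),\sigma^n(y)} = f^n_y \circ h^s_{x,y} \circ (f^n_x)^{-1}$, to write
\begin{equation*}
(h^{s}_{x,y})_* m_x = (f^n_y)^{-1}_* \, (h^{s}_{\sigma^n(x),\sigma^n(y)})_* \, (f^n_x)_* m_x = (f^n_y)^{-1}_*\,(h^s_{\sigma^n(x),\sigma^n(y)})_* m_{\sigma^n(x)},
\end{equation*}
using $F_g$-invariance of the disintegration in the last step. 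Since $y\in W^s_{loc}(x)$, the points $\sigma^n(x)$ and $\sigma^n(y)$ converge together, so by continuity (c) the holonomy $h^s_{\sigma^n(x),\sigma^n(y)}$ tends to the identity; one then wants to conclude that $(h^s_{x,y})_* m_x$ and $m_y$ are both limits (along a suitable subsequence) of $(f^n_y)^{-1}_* m_{\sigma^n(y)} = m_y$ versus $(f^n_y)^{-1}_*(h^s_{\sigma^n(x),\sigma^n(y)})_* m_{\sigma^n(x)}$, and that the discrepancy is killed because $(f^n_y)^{-1}$ is uniformly continuous while the two measures $m_{\sigma^n(x)}$ and $(h^s_{\sigma^n(x),\sigma^n(y)})_* m_{\sigma^n(x)}$ become arbitrarily close in the weak-$*$ (or Wasserstein) sense. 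The role of $\lambda_{con}(F_g,m)=0$ enters in controlling that pushing forward by $(f^n_y)^{-1}$ does not blow up the distance: one needs that $f^n_y$ does not contract too violently (equivalently that $(f^n_y)^{-1}$ does not expand too violently) on the relevant scales, which is exactly the information that the contraction exponent is zero rather than negative.

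Concretely, I would run a martingale/density-point argument: consider the sequence of functions $x\mapsto d_{W}((h^s_{x,y})_*m_x,\, m_y)$ type quantities, or better, test against a countable dense family of continuous functions $\varphi$ on $S^1$ and study $a_n(x) = \int \varphi\, d((f^n_x)_* m_x) = \int \varphi\circ f^n_x \, dm_x$. Using invariance, $a_n(x) = \int \varphi \, dm_{\sigma^n(x)}$, which along $m$-generic orbits is a bounded sequence whose Cesàro behavior is governed by the ergodic theorem. The key estimate is that $|\int \varphi\circ f^n_x\,dm_x - \int \varphi\circ f^n_y\,dm_x|$ is small: since $f^n_x$ and $f^n_y$ (for $y\in W^s_{loc}(x)$) are exponentially close in $C^0$ by the domination/holonomy construction, and $\varphi$ is uniformly continuous, this difference $\to 0$; combined with $h^s_{x,y}$ intertwining the fiber dynamics, one extracts $(h^s_{x,y})_* m_x = m_y$ a.e. I expect the main obstacle to be this last analytic step: in the differentiable case one uses that $Df^n$ controls distortion and that convergence of holonomies is in $C^1$, whereas here we only have H\"older/Lipschitz bi-regularity and $C^0$ convergence of holonomies, so the argument must be rephrased purely in terms of moduli of continuity and the (forward) contraction exponent being zero — one has to show that zero contraction exponent prevents the inverse maps $(f^n_x)^{-1}$ from spreading a pair of nearby measures apart faster than the holonomies bring the base points together. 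Establishing that quantitative balance, uniformly along generic orbits (using Birkhoff/Kingman-type control of $\frac1n\log$ of the relevant distances), is the technical heart of the proof.
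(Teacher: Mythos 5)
Your reduction to a one-sided statement is correct and coincides with the paper's: prove that $\lambda_{con}(F_g,m)=0$ forces $m$ to be an $s$-state, then apply this to $F_g^{-1}$ to get a $u$-state. After that, however, you diverge from the paper and attempt a direct Avila--Viana-style argument via the holonomy cocycle relation and a martingale/density-point scheme, whereas the paper's proof is built entirely around \emph{Malicet's Invariance Principle} used as a black box: conjugate $F_g$ by the stable-holonomy change of coordinates $\tilde H(x,p)=(x,h^s_{\varphi(x),x}(p))$ to obtain a cocycle $\tilde F_{\tilde g}$ that is \emph{constant along local stable sets}; show (Lemma~\ref{zero}) that zero contraction exponent for $F_g$ forces zero contraction exponent for $\tilde F_{\tilde g}$; project $\tilde F_{\tilde g}$ to a cocycle over the one-sided shift $\hat\Omega$; invoke Malicet's theorem to get invariance of the one-sided disintegration $\hat m$; and finally use martingale convergence to pull $\hat m$ back to the two-sided shift, which yields $\tilde m_x=\hat m_{P^s(x)}$ and hence $m_x=(h^s_{z,x})_*m_z$ for $z\in W^s_{loc}(x)$. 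This route is precisely what the $su$-domination is for: it makes the constant-along-stables conjugacy and the one-sided projection possible, which in turn makes the problem fit the hypotheses of Malicet's result.

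Your sketch, by contrast, essentially tries to re-prove the Invariance Principle from scratch, and the step you flag as ``the technical heart'' is a genuine gap, not a routine detail: the quantity $\lambda_{con}$ is a pointwise $\limsup_{q\to p}\limsup_n$, so it gives no finite-time, uniform-in-scale control over how much $(f^n_y)^{-1}$ can spread a pair of measures that are close in the weak-$*$ or Wasserstein sense, and without that uniformity the limit you want does not follow. There is also a concrete error in the sketch: $f^n_x$ and $f^n_y$ for $y\in W^s_{loc}(x)$ are \emph{not} exponentially close in $C^0$; what the domination gives is that $(f^n_y)^{-1}\circ f^n_x$ converges to $h^s_{x,y}\ne\mathrm{Id}$, i.e.\ $f^n_x\approx f^n_y\circ h^s_{x,y}$, which is exactly the intertwining relation and not a closeness of the iterates themselves. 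Likewise the manipulation $\int\varphi\circ f^n_y\,dm_x$ does not reduce to $\int\varphi\,dm_{\sigma^n(y)}$ because $m_x\ne m_y$ --- that identity is what you are trying to prove. In short, the reduction is right but the core implication is left unproved; the paper avoids exactly this difficulty by citing Malicet's theorem after the holonomy conjugation and one-sided reduction, which is the whole point of the argument.
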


We explain how to conclude the proof of Theorem A from these propositions. Let $g$ be given by Proposition \ref{perturb}. We can prove that there exists an open set $\mathcal{V}\subset\mathcal{U}$ with $    {g}\in \mathcal{V}$ such that for every element $    {h}\in\mathcal{V}$, the cocycle $    {F}_{    {h}}$ does not admit $su$-states. This is done by contradiction. Suppose that the claim is not true, then we can find a sequence $    {g}_k$ converging to $    {g}$ such that every $    {F}_{    {g}_k}$ admits an su-state $    {m}_k$. Because of the compactness of the weak$^*$ topology, we know that there exists $    {m}$ a $    {F}_{    {g}}$-invariant probability measure and a sub-sequence of $    {m}_k$, that we continue denoting $    {m}_k$, such that the sequence $    {m}_k$ converges to $    {m}$ in the weak$^*$ topology. In Corollary 5.3 of \cite{AV} (see also \cite{TY}) it is stated that the limit of $su$-states is also an $su$-state. This contradicts the conclusion of Proposition \ref{perturb} for $F_g$. Therefore, there exists $\mathcal{V}$, neighborhood of $g$, with the desire property. Moreover, by Proposition \ref{state}, we have that for every $h\in \mathcal{V}$ and every $F_h$-invariant probability measure projecting to $\mu$, $$\lambda_{con}(  {F}_{  {h}},  {m})<0 \text{ or } \lambda_{con}( {{F}_  {h}}^{-1},  {m})<0.$$ This finished the proof of Theorem A. 
\qed

In the following, we present the proof of the propositions.
\vspace{0.2cm}

\noindent \textbf{Proposition \ref{perturb}.} \textit{In the hypotheses of Theorem A, there exists $g$ arbitrarily close to $f$ such that $F_g$ does not admit $su$-states. }
\vspace{0.1cm}

\begin{proof}

The pinching condition in item (b) guarantees that there exists a periodic point $    {x}_0$ such that $    {f}^{per(    {x}_0)}_{    {x}_0}$ has an attracting point $    {a}$ and a repelling point $    {r}$. For simplifying the notation we assume that $    {x}_0$ is a fixed point. 

Let $    {z}\in W^s(    {x}_0)\cap W^u(    {x}_0)$ be a homoclinic point of $    {x}_0$, and since $ W^s(    {x}_0)\cap W^u(    {x}_0)$ is dense,  we can assume that $    {z}$ is not in the same cylinder as $    {x}_0$. 

By definition, there exist two integers $k_1,k_2>0$ such that $    {\sigma}^{k_1}(    {z})\in W^s_{loc}(    {x}_0)$ and $    {\sigma}^{-k_2}(    {z})\in W^u_{loc}(    {x}_0)$. Therefore, since $su$-domination is an open condition, there exists an open set $\mathcal{W}$ such that for every $    {g}$ in $\mathcal{W}$, it is possible to define the maps $\eta_{i,    {g}}$, with $i=1,2$, as follows
$$\begin{aligned}
\eta_{1,     g}=&(    {g}^{k_1}_{    {z}})^{-1}\circ h^{s,    {g}}_{    {x}_0,    {\sigma}^{k_1}(    {z})},\\
\eta_{2,    {g}}=&     {g}^{k_2}_{    {\sigma}^{-k_2}(    {z})}\circ h^{u,    {g}}_{    {x}_0,    {\sigma}^{-k_2}(    {z})}.
\end{aligned}$$

Let $U$ denote an open neighborhood of $    {z}$ in $    {\Omega}$ such that $    {x}_0\not \in  U$ and $    {\sigma}^n(    {z})\not \in  U$ for every $n\neq 0$ and let $V$ be an open neighborhood of $    {z}$ such that $V\subset\overline{V}\subsetneq U$. Since $    {\Omega}$ is a compact metric space, there exists a Lipschitz bump function, $\phi\colon     {\Omega}\to\mathbb{R}$ such that $|\phi(    {x})|\leq1$ for every $     x\in      \Omega$, $\phi(    {x})=0$ in $U^c$ and $\phi(    {x})=1$ in $V$. 

Fix $\varepsilon>0$ such that the $d_{1}$-ball centered at $    {f}$ and with radius $\varepsilon$ is contained in $\mathcal{U}\cap \mathcal{W}$. Let $\delta\in\mathbb{R}$ satisfying the following properties: $0<\delta<{\varepsilon}/{(2H_1(\phi)) }$ and if $R_{\delta}$ is the rotation of angle $\delta$, then $$R_{\delta}(\{\eta_{1,    {f}}(    {a}),\eta_{1,    {f}}(    {r})\})\cap \{\eta_{2,    {f}}(    {a}),\eta_{2,    {f}}(    {r})\}=\emptyset.$$

We construct $g\in\mathcal{U}$ arbitrarily close to $f$ as follows: for every $x\in \Omega$, $$    {g}_{    {x}}=\phi(    {x})\cdot R_{\delta}\circ     {f}_{    {x}}+(1-\phi(    {x}))\cdot    {f}_{    {x}}=    {f}_{    {x}}+\phi(    {x})\delta=R_{\phi(    {x})\delta}\circ     {f}_{    {x}}.$$ In particular, $g_0=f_0$. 

Observe that $H_1(    {g}_{    {x}})=H_1(    {f}_{    {x}})$, this is a consequence of the following identities, $$d(    {g}_{    {x}}(    {p}),    {g}_{    {x}}(    {q}))=
d(R_{\phi(    {x})\delta}\circ     {f}_{    {x}}(    {p}),R_{\phi(    {x})\delta}\circ     {f}_{    {x}}(    {q}))=d(    {f}_{    {x}}(    {p}),    {f}_{    {x}}(    {q})),$$ 
since $R_{\phi(    {x})\delta}$ is an isometry. We have an analogous result for $     f^{-1}$ and $     g^{-1}$. Therefore, $$\begin{aligned}d_{\max}(    {g}_{    {x}},    {f}_{    {x}})&=\max\{\sup_{    {p}\in S^1}d(    {g}_{    {x}}(    {p}),    {f}_{    {x}}(    {p})),\sup_{    {p}\in S^1}d(    {g}^{-1}_{    {x}}(    {p}),    {f}^{-1}_{    {x}}(    {p}))\}\\ &\leq\sup_{    {x}\in     {\Omega}} \delta \phi(    {x})<{\varepsilon}/{2}.\end{aligned}$$

On the other hand, we need to compute ${H}_1({    {g}})$. For this it is enough to observe that
$$\begin{aligned} d_{\max}(    {g}_{    {x}},    {g}_{    {y}})&=\max\{
d_1(R_{\phi(    {x})\delta}\circ     {f}_{    {x}},R_{\phi(    {y})\delta}\circ    {f}_{    {y}}),d_1(    {f}_{    {x}}^{-1}\circ R_{-\phi(    {x})\delta},    {f}^{-1}_{    {y}}\circ R_{-\phi(    {y})\delta})\}\\
&\leq\left( {H}_1 (f)+\delta H_1(\phi)\right)d_{\Omega}(    {x},    {y}),
\end{aligned}$$
then, ${H}_1({    {g}})\leq {H}_1({    {f}})+\delta H_1(\phi)$. Exchanging the roles of $    {f}$ and $    {g}$, we obtain $$\left|{H}_1({    {g}})-{H}_1({    {f}})\right|\leq \varepsilon/2,$$ 
concluding that $d_{1}(    {g},    {f})<\varepsilon$ and then $    {g}$ is in the set $\mathcal{U}$.

Now we prove, by contradiction, that $F_g$ does not admit $su$-states. Thus, we suppose that $m$ is an su-state. By Proposition 4.8 of \cite{AV}, if $m$ is an su-state, then $     {m}$ admits a disintegration $\{     {m}_{     {x}}\}$ that is su-invariant for every $x\in \Omega$ and such that the function $x\mapsto m_x$ is continuous. In particular, it satisfies $$(     {g}_{     {x}_0})_*     {m}_{     {x}_0}=     {m}_{     {x}_0}.$$ This implies that $     {m}_{     {x}_0}$ is a $     {g}_{     {x}_0}$-invariant measure and it must be supported in the non-wandering set of $     {g}_{     {x}_0}$ which consists in the points $\{     {a},     {r}\}$.

Moreover, observe that for this construction we have the relations
\begin{equation}\label{phi}
	\eta_{1,     g}=R_\delta\circ \eta_{1,    {f}}\text{ and }\eta_{2,    {g}}=\eta_{2,    {f}}.
\end{equation} 
and the su-invariance implies $$\eta_{2,      g}^{-1}\circ \eta_{1,      g}(     {a})=     {a}\text{ or }\eta_{2,      g}^{-1}\circ \eta_{1,      g}(     {a})=     {r}.$$ Suppose $\eta_{2,      g}^{-1}\circ \eta_{1,      g}(     {a})=     {a}$, the other case is analogous. Then, $\eta_{1,      g}(     {a})=\eta_{2,      g}(     {a})$, and Equation (\ref{phi}) implies $$R_\delta \circ \eta_{1,      f}(     {a})=\eta_{2,      f}(     {a}),$$ which contradicts the choice of the $\delta$. Therefore, we have proved that there exists $g$ arbitrarily close to $f$ such that $     {F}_{     {g}}$ does not admit su-states. 
\end{proof}
\vspace{0.2cm}

\noindent \textbf{Proposition \ref{state}.} \textit{In the hypotheses of Theorem A, if $m$ is a $F_g$-invariant probability measure projecting to $\mu$ and $\lambda_{con}(    {F}_g,    {m})=0$ and $\lambda_{con}(    {F}_g^{-1},    {m})=0$, then $m$ is an $su$-state.  }
\vspace{0.1cm}

\begin{proof}

We are going to prove that if $\lambda_{con}(    {F}_g,    {m})=0$, then $    {m}$ is an s-state. Then, the proposition follows by applying the argument to both $    {F}_g$ and $    {{F}_g}^{-1}$.

We assume $\lambda_{con}(F_g,m)=0$. Recall that $    {F}_g$ is s-dominated, then we can define a cocycle $\tilde{F}_g\colon     {\Omega}\times S^1\to    {\Omega}\times S^1$ whose action along the fibers is constant on stable sets. This cocycle is defined by,
\begin{equation}\label{constant}
	\tilde{F}_{\tilde{g}}=\tilde{H}^{-1}\circ     {F}_g\circ \tilde{H},
\end{equation}
where $\tilde{H}:    {\Omega}\times S^1\to     {\Omega}\times S^1$  is given by  $$\tilde{H}(    {x},     {p})=(    {x},h^s_{\varphi(    {x}),    {x}}(    {p}))$$ and $\varphi\colon     {\Omega}\to    {\Omega}$ is  $\varphi(    {x})= W^s_{loc}(    {x})\cap W^u_{loc}(    {x}_i)$ where $x_i\in\left[ 0;i \right]$ is fixed for every $i=1,\ldots k$.

We denote as $\tilde{m}$ the $\tilde{F}_{\tilde{g}}$-invariant probability measure defined by,
\begin{equation}\label{mtilde}\tilde{m}_{    {x}}=(h^s_{    {x},\varphi(    {x})})_*    {m}_{    {x}},\end{equation}
for every $    {x}\in    {\Omega}$. 

The following result states the relation between the exponents of contraction of the cocycle $\tilde{F}_g$ with the original one $    {F}_g$. In particular, if $\lambda_{con}(    {F}_g,    {m})=0$, then $\lambda_{con}(\tilde{F}_g,\tilde{m})=0$. 

\begin{lemma}\label{zero}
	There exists a constant $R>0$ such that $$\lambda_{con}(    {F}_g,    {m})\leq R\; \lambda_{con}(\tilde{F}_g,\tilde{m})\leq 0.$$
\end{lemma}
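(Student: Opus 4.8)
The plan is to exploit the fact that the conjugating map $\tilde H$ acts on each fiber by a stable holonomy $h^s_{\varphi(x),x}$, and that these holonomies form a uniformly $\gamma$-H\"older (indeed bi-H\"older, hence bi-Lipschitz in the $\alpha=\beta=1$ reduction) family with a single uniform constant. Write $L\geq 1$ for a common bi-Lipschitz constant of all the maps $h^s_{x,\varphi(x)}$ and their inverses, and set $R = 1/\log L$ or, more conveniently, absorb $L$ into a multiplicative constant so that the claimed inequality reads $\lambda_{con}(F_g,m) \leq R\,\lambda_{con}(\tilde F_g,\tilde m)$. The key algebraic identity is that the $n$-th iterate of the fiber action of $\tilde F_{\tilde g}$ over $x$ is $h^s_{\sigma^n(x),\varphi(\sigma^n(x))} \circ f^n_x \circ h^s_{\varphi(x),x}$ up to the reparametrization built into \eqref{constant}; this follows by telescoping using the equivariance property (b) of the holonomies. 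Consequently, for $p,q$ in the same fiber over $x$,
\begin{equation}\label{eq:telescope}
d\bigl(\tilde f^n_{\tilde x}(\tilde p),\tilde f^n_{\tilde x}(\tilde q)\bigr) \asymp_{L} d\bigl(f^n_x(p'),f^n_x(q')\bigr),
\end{equation}
where $p' = h^s_{\varphi(x),x}(p)$, $q' = h^s_{\varphi(x),x}(q)$, and $\asymp_L$ means the two quantities are within a factor $L^{\pm c}$ for a fixed $c$.

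The steps, in order, are: (1) record the uniform bi-Lipschitz bound on the holonomy family, citing the continuity and uniform-constant statements from \cite{AV} already invoked in the excerpt; (2) prove the telescoping identity for the iterates of $\tilde F_{\tilde g}$, carefully tracking how $\tilde H$ intertwines the two cocycles over the orbit segment $x,\sigma(x),\dots,\sigma^n(x)$; (3) take logarithms in \eqref{eq:telescope}, divide by $n$, and observe that the multiplicative factor $L^{\pm c}$ contributes a $O(1/n)$ term that vanishes in the $\limsup_{n\to\infty}$, so the inner $\limsup$ in the definition of $\lambda_{con}$ is the same for both cocycles at corresponding points; (4) handle the outer $\limsup_{q\to p}$: since $h^s_{\varphi(x),x}$ is a homeomorphism, $q\to p$ is equivalent to $q'\to p'$, so the pointwise exponents satisfy $\lambda_{con}(\tilde F_g,x,p) = \lambda_{con}(F_g,x,h^s_{\varphi(x),x}(p))$ — actually we only need the inequality in the direction that gives a bound after integration, but equality holds here; (5) integrate against $\tilde m$ and use the change of variables \eqref{mtilde} relating $\tilde m_x$ to $m_x$ via the same holonomy, together with the fact that $\tilde m$ projects to $\mu$, to conclude $\lambda_{con}(\tilde F_g,\tilde m) = \int \lambda_{con}(F_g,x,p)\,dm = \lambda_{con}(F_g,m)$ up to the universal constant, and note both sides are $\leq 0$ since exponents of contraction are non-positive by definition.

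The main obstacle I expect is step (3)–(4): the exponent of contraction is a double $\limsup$, and one must be careful that conjugating by a merely H\"older (not smooth) map does not distort the exponential rate. A bi-Lipschitz conjugacy is exactly strong enough to preserve the rate, because $\log(L^{\pm c} \cdot t)/n = (\log t)/n + O(1/n)$; this is why the $su$-domination hypothesis (c), which forces the holonomies to exist with uniform H\"older constant via the graph transform of \cite{HPS}, is essential, and why the constant $R$ in the statement is harmless — in the Lipschitz reduction one can even take $R=1$, but stating it with a constant $R$ covers the genuinely H\"older case $\alpha,\beta<1$ where the distortion of $d(\cdot,\cdot)$ under the holonomy is by a power, turning the rate inequality into $\lambda_{con}(F_g,x,p)\leq \gamma^{-1}\lambda_{con}(\tilde F_g,x,p')$ or similar. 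A secondary point to check is measurability and the applicability of the disintegration change-of-variables in step (5), but this is routine given that $x\mapsto h^s_{\varphi(x),x}$ is continuous and $\varphi$ is the standard local-product projection.
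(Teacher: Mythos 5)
Your overall architecture matches the paper's: conjugate by $\tilde H$, telescope the iterates, control the distortion introduced by the holonomies, pass to the double $\limsup$, and then change variables in the integral using \eqref{mtilde}. The telescoping identity you state is correct and is exactly the one the paper uses. However, there is a genuine error at the heart of your step (1)--(4), and it is precisely the point where the constant $R$ comes from.

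You claim the holonomies $h^s_{\varphi(x),x}$ are ``bi-H\"older, hence bi-Lipschitz in the $\alpha=\beta=1$ reduction,'' and from this you deduce a multiplicative ($\asymp_L$) comparison \eqref{eq:telescope} and pointwise equality of exponents, which would give $R=1$. This is not correct: the Lipschitz reduction $\alpha=\beta=1$ concerns the regularity of the fiber maps $f_x$, not of the holonomies. The graph-transform construction of \cite{HPS} under the $su$-domination hypothesis (c) produces holonomies that are only $\gamma$-H\"older, with $\gamma$ determined by the bunching constant $c$ and generally strictly less than $1$; bi-H\"older does not imply bi-Lipschitz. Consequently the correct comparison is a power relation, $d(h^s(u),h^s(v))\leq L\, d(u,v)^\gamma$, not a multiplicative one. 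Feeding this through $\log(\cdot)/n$ gives $\lambda_{con}(F_g,x,p)\leq \gamma\,\lambda_{con}(\tilde F_g,x,\tilde p)$, so the paper takes $R=\gamma$ (already in the Lipschitz reduction, not just for $\alpha,\beta<1$). Your proposed correction $R=\gamma^{-1}$ for the genuinely H\"older case is in the wrong direction: since the exponents are non-positive and $\gamma\leq 1$, the inequality $\lambda_{con}(F_g,x,p)\leq R\,\lambda_{con}(\tilde F_g,x,\tilde p)$ holds for $R\leq\gamma$ but can fail for $R>\gamma$, and $\gamma^{-1}\geq\gamma$. Once the exponent is corrected to $R=\gamma$, the remainder of your plan (steps (3)--(5): the $\log L/n\to 0$ argument, the equivalence of $q\to p$ and $\tilde q\to\tilde p$ via the homeomorphism, and the change of variables in the integral) agrees with the paper's proof.
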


\begin{proof}
	The fact that $\lambda_{con}(\tilde{F}_g,\tilde{m})\leq 0$ comes from the definition. We are left to prove the first inequality.

Observe that for every $x\in \Omega$ and $p\in S^1$, $${g}^n_{    {x}}(    {p})=h^s_{\varphi(    {\sigma}^n(    {x})),    {\sigma}^n(    {x})}\circ \tilde{g}^n_{    {x}}\circ h^s_{    {x},\varphi(    {x})}(    {p}),$$ where $\tilde{g}^n_{    {x}}$ denotes the action on the fiber of the cocycle $\tilde{F}_g$.

Then, for every $p,q \in S^1$, we want to bound the following distance,
	\begin{equation*}
		d(    {g}^n_{    {x}}(    {p}),    {g}^n_{    {x}}(    {q}))= \\ d(h^s_{\varphi(    {\sigma}^n(    {x})),    {\sigma}^n(    {x})}\circ \tilde{g}^n_{    {x}}\circ h^s_{    {x},\varphi(    {x})}(    {p}),h^s_{\varphi(    {\sigma}^n(    {x})),    {\sigma}^n(    {x})}\circ \tilde{g}^n_{    {x}}\circ h^s_{    {x},\varphi(    {x})}(    {q}))
	\end{equation*}
	with an expression depending only on $d(\tilde{g}^n_{    {x}}(    {p}),\tilde{g}^n_{    {x}}(    {q}))$. 
	
	Since $h^s_{    {x},\varphi(    {x})}$ is an homeomorphism, we can replace by $h^s_{    {x},\varphi(    {x})}    {p}=\tilde{p}$ and $h^s_{    {x},\varphi(    {x})}    {q}=\tilde{q}$, and the previous expression gets reduced to
	\begin{equation}
		\label{eq1}d(h^s_{\varphi(    {\sigma}^n(    {x})),    {\sigma}^n(    {x})}\circ \tilde{g}^n_{    {x}}(\tilde{p}),h^s_{\varphi(    {\sigma}^n(    {x})),    {\sigma}^n(    {x})}\circ \tilde{g}^n_{    {x}}(\tilde{q})).
	\end{equation}
	As $h^s_{\varphi(    {\sigma}^n(    {x})),    {\sigma}^n(    {x})}$ are $\gamma$-H\"older homeomorphism with uniform H\"older constant, we get that there exists $L>0$ such that
	$$d(h^s_{\varphi(    {\sigma}^n(    {x})),    {\sigma}^n(    {x})}\circ \tilde{g}^n_{    {x}}(\tilde{p}),h^s_{\varphi(    {\sigma}^n(    {x})),    {\sigma}^n(    {x})}\circ \tilde{g}^n_{    {x}}(\tilde{q}))\leq  Ld(\tilde{g}^n_{    {x}}(\tilde{p}),\tilde{g}^n_{    {x}}(\tilde{q}))^{\gamma}.$$
	
	Finally, we get the inequality
	$$\frac{\log (d(    {g}^n_{    {x}}(    {p}),    {g}^n_{    {x}}(    {q})))}{n}\leq \frac{\log (L) + \gamma\log(d(\tilde{g}^n_{    {x}}(\tilde{p}),\tilde{g}^n_{    {x}}(\tilde{q}))}{n},$$
	in which by making $n$ going to infinity and $    {q}$ approximating to $    {p}$, we obtain 
	$$\lambda_{con}(    {F}_g,    {x},    {p})\leq R\,\lambda_{con}(\tilde{F}_g,    {x},\tilde{p})=R\,\lambda_{con}(\tilde{F}_g,    {x},h^s_{    {x},\varphi(    {x})}    {p}),$$
	with $R=\gamma$. When we integrate with respect to the measure $    {m}$, we obtain 
\begin{equation*}
\begin{aligned}
\lambda_{con}(    {F}_g,    {m})&\leq R\int\lambda_{con}(\tilde{F}_g,    {x},h^s_{    {x},\varphi(    {x})}    {p})d    {m}\\
&=R\int\lambda_{con}(\tilde{F}_g,    {x},    {p})d\tilde{m}=R\, \lambda_{con}(\tilde{F}_g,\tilde{m}).
\end{aligned}
\end{equation*}
	
\end{proof}

Since $\tilde F_{\tilde g}$ is constant along $W^s_{loc}(x)$, it is possible to project it to a cocycle over the non-invertible shift. Recall that $P^s$ denotes the projection to the positive coordinates of $    {\Omega}$. Let $\hat\Omega=P^s(    {\Omega})$ and $\hat\sigma\colon\hat\Omega\to\hat\Omega$ be the unilateral shift satisfying $\hat\sigma\circ P^s=P^s\circ    {\sigma}$. If $    {\mu}$ is a $    {\sigma}$-invariant measure, then we consider $\hat\mu$ such that $P^s_*    {\mu}=\hat\mu$. Observe that $\hat\mu$ is a $\hat\sigma$-invariant measure.

Let $\hat F_{\hat g}\colon \hat\Omega\times S^1\to \hat\Omega\times S^1$ such that $\hat F_{\hat{g}}\circ (P^s,Id)=\tilde{F}_g$ where $\hat g$ is the map $\hat g\colon\hat\Omega\to \text{Lip}(S^1)$ satisfying $\hat{g}_{P^s(    {x})}=\tilde{g}_{    {x}}$. Thus, as we are assuming that the exponent of contraction of $\tilde F_{\tilde g}$ is zero, the Invariance Principle in \cite{M} concludes that $\hat m=(P^s\times Id)_*\tilde{m}$ is $\hat F_{\hat{g}}$-invariant, that is \begin{equation}\label{minv}
	\hat m_{\hat \sigma(x)}=\left(\hat g_{\hat x}\right)_*\hat m_{\hat x}.
\end{equation}
Finally, we recover the original measure $m$ together with the information that we obtained from $\hat m$ and the Invariance Principle. This is made by a well known result of convergence of martingale sequences that relates $\{\tilde{m}_{    {x}}\colon     {x}\in    {\Omega}\}$ and $\{\hat m_{\hat x}\colon \hat x\in\hat \Omega\}$ by
	\begin{equation}
	\label{igualdad}
	\tilde{m}_{    {x}}=\lim_{n\to\infty}\left(\hat{g}^n_{P^s(    {\sigma}^{-n}(    {x}))}\right)_*\hat m_{P^s(    {\sigma}^{-n}(    {x}))},
\end{equation}
at $    {\mu}$-almost every $    {x}\in    {\Omega}$, for details see \cite{V}. Using (\ref{minv}) in (\ref{igualdad}), we get that $\tilde{m}_{    {x}}=\hat m_{P^s(    {x})}=\tilde{m}_{    {z}}$ for $    {\mu}$ almost every $    {z}\in (P^s)^{-1}(x)=W^s_{loc}(    {x})$. Finally, 
$$ \left(h^s_{    {x},\varphi(    {x})}\right)_*    {m}_{    {x}}=\tilde{m}_{    {x}}=\tilde{m}_{    {z}}= \left(h^s_{    {z},\varphi(    {z})}\right)_*    {m}_{    {z}},$$
and since $\varphi(    {x})=\varphi(    {z})$, $$    {m}_{    {x}}=\left(h^s_{\varphi(    {x}),    {x}}\circ h^s_{    {z},\varphi(    {x})}\right)_*    {m}_{    {z}}=\left(h^s_{    {z},    {x}}\right)_*    {m}_{    {z}}$$
concluding the proof that $m$ is an s-state. 

\end{proof}

\subsection*{Acknowledgments} The authors thank Mauricio Poletti for suggesting this problem and the anonymous referee for useful observations that help improve the text. C.F. has was partially supported by FCT - Funda\c{c}\~ao para a Ci\^encia e a Tecnologia, under the project PTDC/MAT-PUR/29126/2017. K.M. has been supported by Conselho Nacional de Desenvolvimento Cient\'ifico e Tecnol\'ogico - CNPq - Brazil.

\end{document}